\newcommand{\bbC}{\mathbb{C}}
\newcommand{\bbN}{\mathbb{N}}
\newcommand{\bbR}{\mathbb{R}}
\newcommand{\suchthat}{\,|\,} 
\DeclareMathOperator{\id}{id} 
\DeclareMathOperator{\re}{Re} 
\newcommand{\norm}[1]{\left\lVert #1 \right\rVert} 
\newcommand{\modulus}[1]{\left\lvert #1 \right\rvert} 
\DeclareMathOperator{\dom}{dom} 
\newcommand{\spec}{\sigma} 
\newcommand{\Res}{\mathcal{R}} 
\newcommand{\iu}{\mathrm{i}} 
\theoremstyle{definition}
\newtheorem{definition}{Definition}
\newtheorem{remark}[definition]{Remark}
\newtheorem{examples}[definition]{Examples}
\theoremstyle{plain}
\newtheorem{proposition}[definition]{Proposition}
\newtheorem{lemma}[definition]{Lemma}
\newtheorem{theorem}[definition]{Theorem}
\newtheorem{corollary}[definition]{Corollary}
\begin{document}

\title[Analyticity is inherited under domination]{Analyticity of positive semigroups is inherited under domination}
\author{Jochen Gl\"uck}
\address{Jochen Gl\"uck, University of Wuppertal, School of Mathematics and Natural Sciences, Gaußstr.\ 20, 42119 Wuppertal, Germany}
\email{glueck@uni-wuppertal.de}
\subjclass[2020]{47D06; 47B65; 47A10}
\keywords{Domination; positive semigroup; analytic semigroup; holomorphic semigroup; Perron--Frobenius theory; spectrum of positive operators}
\date{\today}
\dedicatory{Dedicated to the memory of Manfred P.\ H.\ Wolff}
\begin{abstract}
	For positive $C_0$-semigroups $S$ and $T$ on a Banach lattice such that $S(t) \le T(t)$ for all times $t$, we prove that analyticity of $T$ implies analyticity of $S$. This answers an open problem posed by Arendt in 2004.
	
	Our proof is based on a spectral theoretic argument: we apply spectral theory of positive operators to multiplication operators that are induced by $S$ and $T$ on a vector-valued function space.
\end{abstract}

\maketitle

In a problem posed in \cite{Nagel2004}, Arendt asked whether analyticity of positive semigroups is inherited under domination.
The purpose of this note is to give a positive answer to this question:

\begin{theorem}
	\label{thm:intro-dom}
	Let $S,T$ be $C_0$-semigroups on a complex Banach lattice $E$ such that $0 \le S(t) \le T(t)$ for all $t \in [0,\infty)$.
	If $T$ is analytic, then so is $S$.
\end{theorem}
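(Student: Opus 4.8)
The plan is to characterize analyticity of a semigroup through a resolvent/sector condition and then transport that condition from $T$ to $S$ using positivity. Recall the standard characterization: a bounded $C_0$-semigroup with generator is analytic if and only if its generator sectorial, equivalently (after a rescaling so that the semigroup is bounded and the spectrum sits in a left half-plane) if and only if there is a constant $M$ such that the resolvent satisfies $\norm{\lambda R(\lambda,A)} \le M$ for all $\lambda$ in a right half-plane $\re \lambda > \omega$. The domination $0 \le S(t) \le T(t)$ passes to the resolvents via Laplace transforms: for $\re \lambda$ large one has $0 \le R(\lambda, A_S) \le R(\lambda, A_T)$ in the lattice order, where $A_S, A_T$ are the generators of $S$ and $T$. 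So if the analyticity of $T$ gives $\norm{\lambda R(\lambda, A_T)}$ bounded on a half-plane, the naive hope would be to dominate $\norm{\lambda R(\lambda, A_S)}$ directly. The obstacle is that order domination $0 \le P \le Q$ between operators does \emph{not} in general imply $\norm{P} \le \norm{Q}$, nor does it control $\norm{P}$ off the real (resp.\ the boundary) axis, because analyticity is really a statement about $\lambda$ with nonzero imaginary part, where the resolvents need not be positive.

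To overcome this I would follow the abstract's hint and pass to a vector-valued function space on which $S$ and $T$ induce positive multiplication operators, then invoke Perron--Frobenius theory. Concretely, I would work on a space such as $L^p(\bbR; E)$ or an $\ell^p$-type space of $E$-valued sequences, and encode the semigroup together with a rotation in the time/frequency variable, so that the off-axis behavior of the scalar resolvent $R(\omega + i s, A)$ becomes the behavior of a genuinely \emph{positive} operator built from $S$ acting on the vector-valued space. The point of this construction is to convert the quantity $\sup_s \norm{s\, R(\omega + is, A_S)}$, which governs analyticity, into the spectral radius (or spectral bound) of a positive operator $\calS$ dominated by the corresponding positive operator $\calT$ coming from $T$.

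The heart of the argument is then a Perron--Frobenius principle for dominated positive operators: if $0 \le \calS \le \calT$ on a Banach lattice and $\calT$ has a certain spectral gap or boundedness property equivalent to analyticity of $T$, then the spectral radius of $\calS$ inherits the same bound. The key classical input is that for positive operators the spectral bound/radius is monotone under domination, $\spr(\calS) \le \spr(\calT)$ and more refined versions controlling the spectrum near the imaginary axis. I expect the main obstacle to be precisely the construction of the right vector-valued space and the right pair of positive multiplication operators $\calS \le \calT$ so that (i) the domination $S(t) \le T(t)$ lifts to $\calS \le \calT$, and (ii) analyticity of $T$ translates \emph{exactly} into a spectral condition on $\calT$ that is inherited by $\calS$ and that translates back into analyticity of $S$. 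Getting this dictionary to work in both directions — in particular ensuring that the spectral information one extracts for $\calS$ really encodes the resolvent bound $\sup_s \norm{s R(\omega+is,A_S)} < \infty$ rather than a weaker scalar estimate — is where the real work lies, and it is the step I would develop most carefully.
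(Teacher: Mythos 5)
You correctly diagnose the central obstacle (off the real axis the resolvents are not positive, so domination gives no direct norm control) and you correctly guess, from the abstract, the general shape of the argument: lift $S$ and $T$ to positive multiplication operators on a vector-valued function space and invoke Perron--Frobenius theory. But the step you yourself flag as ``where the real work lies'' is exactly the step that is missing, and in the frequency-domain form you propose it is doubtful it can be carried out at all: the operator family $s \mapsto \Res(\omega + is, A_S)$ is not positive for $s \neq 0$, and no ``rotation in the time/frequency variable'' is specified that would make the induced multiplication operator positive, so the domination $0 \le S(t) \le T(t)$ has no evident lift to the objects you want to compare. Moreover, the Perron--Frobenius input you name, the monotonicity $\spr(\calS) \le \spr(\calT)$ for $0 \le \calS \le \calT$, is too weak for any version of this strategy: one needs control of \emph{individual peripheral spectral values}, not of the spectral radius.

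The idea that closes the gap is to stay in the time domain and avoid the generator entirely. By a theorem of Kato, $T$ is analytic if and only if there exist $t_0 > 0$, a unimodular $\lambda$, and a constant $K$ such that $\lambda \notin \spec(T(t))$ and $\norm{\Res(\lambda, T(t))} \le K$ for all $t \in (0, t_0]$. The operators $S(t)$ and $T(t)$ themselves \emph{are} positive, so the multiplication operators $\hat T_{t_0}$ and $\hat S_{t_0}$ on $\ell^\infty((0,t_0]; E)$ given by $(\hat T_{t_0} f)(t) = T(t) f(t)$ satisfy $0 \le \hat S_{t_0} \le \hat T_{t_0}$, and an elementary lemma shows that Kato's uniform resolvent bound is equivalent to the single condition $\lambda \notin \spec(\hat T_{t_0})$. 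The precise Perron--Frobenius theorem needed is then the one of R\"abiger and Wolff: if $0 \le \calS \le \calT$ with $\calT$ power bounded and $\modulus{\lambda} = 1$, then $\lambda \in \spec(\calS)$ forces $\lambda \in \spec(\calT)$. After rescaling so that $T$ is bounded (which makes $\hat T_{t_0}$ power bounded, since $T(t)^n = T(nt)$), this yields $\lambda \notin \spec(\hat S_{t_0})$, and Kato's criterion, read backwards, gives analyticity of $S$. So the missing idea in your proposal is the replacement of sectorial resolvent estimates for the generator by Kato's characterisation in terms of the semigroup operators at a single unimodular point: that reformulation is what keeps every operator in sight positive, makes the uniform-in-$t$ bound expressible as invertibility of one positive operator, and brings the spectral parameter onto the unit circle where the peripheral-spectrum theorem applies.
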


The sector on which $T$ is analytic is not inherited by $S$: 
the angle of analyticity for $S$ can be strictly larger 
(Example~\ref{exas:sector}\ref{exas:sector:itm:larger-for-S}) 
or strictly smaller 
(Example~\ref{exas:sector}\ref{exas:sector:itm:smaller-for-S}) 
than for $T$.

Throughout, we freely use the theory of Banach lattices (see e.g.\ \cite{Schaefer1974}) and of $C_0$-semigroups (see e.g.\ \cite{EngelNagel2000}). 
Standard references for positive semigroups are \cite{ArendtEtAl1986, BatkaiKramarFijavzRhandi2017}, and domination between semigroups is discussed in detail in \cite[Section~C-II.4]{ArendtEtAl1986}.

We prove Theorem~\ref{thm:intro-dom} before Remark~\ref{rem:non-positive}.
The proof is surprisingly easy if one employs two appropriate results from the literature and reformulates one of them in a slick way.
We begin by stating those results in the following two propositions.

The first proposition is (a special case of) a result by Kato \cite{Kato1970}; 
a simplified version of the proof of ``(ii) $\Rightarrow$ (i)'' can be found in \cite[Lemma~2.1]{Fackler2013}.

\begin{proposition}[Kato]
	\label{prop:kato}
	For a $C_0$-semigroup $T$ on a complex Banach space $X$ the following assertions are equivalent:
	\begin{enumerate}[label=\upshape(\roman*)]
		\item 
		The semigroup $T$ is analytic.
		
		\item 
		There exists a time $t_0 > 0$ and a complex number $\lambda$ of modulus $1$ with the following property:
		there is a constant $K > 0$ such that
		\begin{align*}
			\lambda \not\in \spec(T(t)) 
			\qquad \text{and} \qquad 
			\norm{\Res\big(\lambda, T(t)\big)} \le K
		\end{align*}
		for all $t \in (0, t_0]$.
	\end{enumerate}
\end{proposition}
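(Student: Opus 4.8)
The plan is to run both implications off a single algebraic identity linking the resolvent of $T(t)$ to that of the generator $A$ of $T$. Fix $t>0$ and $\mu\in\bbC$ with $e^{t\mu}=\lambda$. Differentiating $r\mapsto e^{r\mu}T(t-r)$ gives $e^{r\mu}(\mu\id-A)T(t-r)$, so integrating over $[0,t]$ and pulling the closed operator $\mu\id-A$ out of the integral yields, first on $D(A)$ and then on all of $X$,
\[
	\lambda\id-T(t)=(\mu\id-A)\,Q_t(\mu)=Q_t(\mu)\,(\mu\id-A),\qquad Q_t(\mu):=\int_0^t e^{(t-r)\mu}\,T(r)\dx r .
\]
Because $Q_t(\mu)$ commutes with $A$ (and with $\Res(\lambda,T(t))$), whenever $\lambda\in\rho(T(t))$ this exhibits a bounded two-sided inverse $Q_t(\mu)\,\Res(\lambda,T(t))$ of $\mu\id-A$; hence $\mu\in\rho(A)$ and
\[
	\Res(\mu,A)=Q_t(\mu)\,\Res\big(\lambda,T(t)\big).
\]
This is the engine of the whole argument.

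For ``(ii) $\Rightarrow$ (i)'' I would specialise to purely imaginary $\mu=is$. Given $s$ with $\modulus s\ge 2\pi/t_0$, I choose the integer $k$ for which $t:=(\arg\lambda+2\pi k)/s$ lies in $(0,t_0]$; for either sign of $s$ this is possible with $t=O(1/\modulus s)$, using the single given $\lambda$. Then $e^{ist}=\lambda$, the identity applies, and since $\modulus{e^{(t-r)is}}=1$ it gives $is\in\rho(A)$ together with
\[
	\norm{\Res(is,A)}\le\norm{Q_t(is)}\,K\le K\,t\sup_{0\le r\le t_0}\norm{T(r)}\le\frac{C}{\modulus s}.
\]
Thus $A$ has the half-plane $\{\re\mu>\omega\}$ in its resolvent set (from the growth bound) together with the vertical-line estimate $\norm{\Res(is,A)}\le C/\modulus s$ for all large $\modulus s$. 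Expanding $\Res(\argument,A)$ in a Neumann series about the points $is$ propagates this bound into a left-opening sector with vertex on the real axis, and the standard sectoriality criterion for generators then yields analyticity of $T$.

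For ``(i) $\Rightarrow$ (ii)'' I would use that analytic semigroups are eventually norm continuous, so that the spectral mapping theorem $\spec(T(t))\setminus\{0\}=e^{t\spec(A)}$ holds and $\spec(A)$ lies in a left-opening sector of half-angle $\beta<\pi/2$. A short computation shows $e^{t\spec(A)}$ is contained, uniformly in small $t$, in a logarithmic-spiral region whose trace on the unit circle shrinks to the single point $1$; hence any fixed $\lambda$ with $\modulus\lambda=1$ and $\lambda\neq1$ satisfies $\lambda\notin\spec(T(t))$ for all small $t$, at a definite distance from the spectrum. The uniform bound $\norm{\Res(\lambda,T(t))}\le K$ then follows from the sectorial resolvent estimate for $A$ via the functional-calculus representation $\Res(\lambda,T(t))=\frac{1}{2\pi i}\int_\Gamma(\lambda-e^{t\mu})^{-1}\Res(\mu,A)\dx\mu$ over a boundary contour $\Gamma$ of the sector.

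The delicate points, and the main obstacle, are concentrated in ``(ii) $\Rightarrow$ (i)''. First one must verify carefully that the one-sided inverses furnished by the identity genuinely combine into $\mu\in\rho(A)$ — this is exactly where commutativity of $Q_t(\mu)$ with $A$ is used — and that the branch $t\in(0,t_0]$ can always be selected with $t=O(1/\modulus s)$ for both signs of $s$. Second, and this is the real work, converting the vertical-line estimate into analyticity requires the Neumann-series continuation and uniform control of $\Res(\argument,A)$ on the resulting sector; this is precisely the step carried out in a streamlined form in \cite[Lemma~2.1]{Fackler2013}.
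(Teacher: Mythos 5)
Your statement is one the paper does not prove at all: it is quoted from Kato \cite{Kato1970}, and for the implication (ii) $\Rightarrow$ (i) the paper points to the streamlined proof in \cite[Lemma~2.1]{Fackler2013}. So the comparison is with the literature proofs the paper cites. Your ``(ii) $\Rightarrow$ (i)'' is essentially that cited argument and is sound: the identity $\lambda\id - T(t) = (\mu\id - A)Q_t(\mu) = Q_t(\mu)(\mu\id - A)$ is correct (and your commutation argument does yield a two-sided inverse, hence $is \in \rho(A)$); the branch selection works with $t \le 2\pi/\modulus{s} \le t_0$ for both signs of $s$; and $\norm{\Res(is,A)} \le K\,t\,\sup_{[0,t_0]}\norm{T(r)} \le C/\modulus{s}$ plus the Neumann-series bootstrap is the standard route to sectoriality. (Minor slip: the resolvent set acquires a \emph{right}-opening sector of half-angle greater than $\pi/2$; it is the spectrum that sits in the left-opening one.)

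The genuine gap is in the last step of ``(i) $\Rightarrow$ (ii)''. First, the displayed formula $\Res(\lambda,T(t)) = \frac{1}{2\pi i}\int_\Gamma (\lambda - e^{t\mu})^{-1}\Res(\mu,A)\dx\mu$ is false as written: along $\Gamma$ the integrand tends to $\lambda^{-1}\Res(\mu,A)$, whose norm decays only like $1/\modulus{\mu}$, so the integral diverges. The correct version splits off the value at infinity,
\begin{align*}
	\Res\big(\lambda,T(t)\big) = \frac{1}{\lambda}\id + \frac{1}{2\pi i}\int_\Gamma \frac{e^{t\mu}}{\lambda\,(\lambda - e^{t\mu})}\,\Res(\mu,A)\dx\mu .
\end{align*}
Second, and more seriously, even this corrected integral does not give the uniform bound $K$ by the estimate you propose: on the portion of a \emph{fixed} contour where $1 \lesssim \modulus{\mu} \lesssim 1/t$, the factor $e^{t\mu}(\lambda - e^{t\mu})^{-1}$ has modulus of order one (there is no usable oscillation either, since the total phase variation over that range is $O(1)$), while $\norm{\Res(\mu,A)} \sim C/\modulus{\mu}$, so the absolute-value bound is $\int_1^{1/t}\rho^{-1}\dx\rho \approx \log(1/t)$, which blows up as $t \downarrow 0$. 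Uniformity in $t$ needs an extra idea, for instance: pass to the rescaled generators $B_t := tA$, which are \emph{uniformly} sectorial by scaling ($\norm{\Res(z,tA)} = t^{-1}\norm{\Res(z/t,A)} \le C/\modulus{z - t\omega}$), apply the calculus to the fixed function $f(z) = (\lambda - e^{z})^{-1}$, and regularize at \emph{both} ends of the contour by writing $f(z) = \lambda^{-1} + \frac{1}{\lambda(\lambda-1)}\frac{1}{1-z} + g(z)$ with $g$ vanishing at $0$ and decaying exponentially at infinity in the left sector; then the contour integral defining $g(B_t)$ converges uniformly in $t \in (0,t_0]$, and $(1 - B_t)^{-1}$ is uniformly bounded. (Note this is exactly where $\lambda \ne 1$ enters, consistent with your correct observation that the trace of $\spec(T(t))$ on the unit circle shrinks to $\{1\}$; note also that here one cannot simply reduce to a bounded semigroup by rescaling, since rescaling drags the fixed $\lambda$ along $t$ --- but the resulting vertex drift $t\omega$ of $\spec(B_t)$ is harmless.) The first half of your necessity argument --- eventual norm continuity, the spectral mapping theorem $\spec(T(t))\setminus\{0\} = e^{t\spec(A)}$, and the uniform distance of $\lambda$ from the spiral region --- is fine; but since distance to the spectrum never bounds resolvents in a general Banach space, the entire weight of (i) $\Rightarrow$ (ii) rests on the contour estimate, and as sketched that estimate does not close.
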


Here, $\spec(T(t))$ denotes the spectrum of $T(t)$ and $\Res\big(\lambda, T(t)\big) := \big(\lambda - T(t)\big)^{-1}$ the resolvent of $T(t)$ at $\lambda$.

The second proposition belongs to the spectral theory of positive operators 
(sometimes referred to as infinite-dimensional \emph{Perron--Frobenius theory}, e.g.\ in \cite{ArendtEtAl1986, Greiner1981}).
The result was proved, even under slightly more general conditions, by Räbiger and Wolff \cite[Theorem~1.4]{RaebigerWolff1997};
their proof employs techniques which were introduced earlier by Lotz \cite{Lotz1968}.

\begin{proposition}[Räbiger--Wolff]
	\label{prop:raebiger-wolff}
	Let $0 \le S \le T$ be bounded linear operators on a complex Banach lattice $E$ and assume that $T$ is power bounded, i.e.\ $\sup_{n \in \bbN_0} \norm{T^n} < \infty$.
	If $\lambda \in \spec(S)$ and $\modulus{\lambda} = 1$, then also $\lambda \in \spec(T)$.
\end{proposition}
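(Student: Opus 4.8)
The plan is to argue by contraposition and to reduce the assertion about spectra to one about genuine eigenvectors, at which point Perron--Frobenius theory can be applied. First I would record the elementary consequences of domination. Since the lattice norm is monotone, one has $\norm{A} = \sup\{\norm{Az} : z \ge 0,\ \norm{z} \le 1\}$ for every positive operator $A$; combined with $0 \le S \le T$ this gives $\norm{S^n} \le \norm{T^n}$ once we know $0 \le S^n \le T^n$, which follows by induction since multiplying $S \le T$ by the positive operators $S$ and $T$ preserves the inequality. Hence $S$ is power bounded as well, and both $\spec(S)$ and $\spec(T)$ are contained in the closed unit disc. In particular any $\lambda$ with $\modulus{\lambda} = 1$ lies on the topological boundary of $\spec(S)$, and the boundary of the spectrum is always contained in the approximate point spectrum; thus $\lambda \in \spec(S)$ with $\modulus{\lambda} = 1$ provides unit vectors $x_n$ with $(\lambda - S)x_n \to 0$.

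Next I would bring in positivity through the resolvent. Expanding $\Res(\mu, S) = \sum_{k \ge 0} \mu^{-(k+1)} S^k$ for $\modulus{\mu} > 1$ and using $\modulus{S^k z} \le S^k \modulus{z} \le T^k \modulus{z}$, one obtains the pointwise domination $\modulus{\Res(\mu, S)z} \le \Res(\modulus{\mu}, T)\modulus{z}$ and hence $\norm{\Res(\mu, S)} \le \norm{\Res(\modulus{\mu}, T)}$. The naive attempt to insert the approximate eigenvectors $x_n$ into this estimate only controls the resolvent of $T$ at the \emph{real} point $\modulus{\mu}$, and thus recovers information about the spectral radius rather than about $\lambda$ itself: the unimodular factors $\mu^{-(k+1)}$ are annihilated by the modulus. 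Losing the phase of $\lambda$ in this way is exactly the main obstacle, and it is why a purely resolvent-theoretic comparison cannot succeed on its own.

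To retain the phase I would pass to a Banach lattice ultrapower $\widehat{E}$, on which $S$ and $T$ induce positive operators $\widehat{S} \le \widehat{T}$ with the same spectra and on which the approximate eigenvectors coalesce into a genuine eigenvector $\widehat{S}\widehat{x} = \lambda \widehat{x}$ with $\norm{\widehat{x}} = 1$. Taking moduli and using domination yields $\widehat{T}\modulus{\widehat{x}} \ge \widehat{S}\modulus{\widehat{x}} \ge \modulus{\lambda}\modulus{\widehat{x}} = \modulus{\widehat{x}}$, so $u := \modulus{\widehat{x}}$ is a positive norm-one vector with $\widehat{T}u \ge u$; iterating gives $\widehat{T}^n u \ge u$ and $\Res(r, \widehat{T})u \ge (r-1)^{-1} u$ for $r > 1$, which already forces the value $1$ into $\spec(\widehat{T}) = \spec(T)$. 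The decisive and most delicate step is then to promote this from modulus one to the prescribed value $\lambda$: here I would invoke Lotz's technique on the cyclic structure of the peripheral point spectrum of positive operators, passing to the closed ideal generated by $u$ (and a quotient on which the superinvariance $\widehat{T}u \ge u$ becomes a genuine fixed vector) and using the associated signum (rotation) operator to transport the eigenvalue relation $\widehat{S}\widehat{x} = \lambda \widehat{x}$ into $\lambda \in \spec(\widehat{T})$. Descending from the ultrapower finally gives $\lambda \in \spec(T)$. I expect this last cyclicity argument, rather than any of the preceding bookkeeping, to be the crux of the proof.
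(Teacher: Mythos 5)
The paper does not actually prove this proposition: it imports it from the literature, citing \cite[Theorem~1.4]{RaebigerWolff1997} and noting that the proof there rests on techniques of Lotz \cite{Lotz1968}. Measured against that cited proof, your roadmap reconstructs the correct architecture, and all of your preliminary steps are sound: the induction giving $0 \le S^n \le T^n$ and hence power boundedness of $S$; the localisation of a unimodular $\lambda \in \spec(S)$ in the approximate point spectrum via the boundary-of-spectrum argument; the passage to a Banach lattice ultrapower, where the canonical extensions satisfy $\widehat{S} \le \widehat{T}$, spectra are preserved, and the approximate eigenvectors coalesce into $\widehat{S}\widehat{x} = \lambda \widehat{x}$; the deduction $\widehat{T}u \ge u$ for $u = \modulus{\widehat{x}}$; and the resolvent blow-up $\Res(r,\widehat{T})u \ge (r-1)^{-1}u$ forcing $1 \in \spec(T)$. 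Your diagnosis of why the naive resolvent comparison fails --- the modulus annihilates the phase of $\lambda$ --- is also exactly right, and is the same observation the paper itself makes when explaining why Proposition~\ref{prop:raebiger-wolff} is needed.

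However, as a standalone proof the proposal has a genuine gap, located precisely where you yourself place the crux: the step from $\widehat{S}\widehat{x} = \lambda\widehat{x}$ together with $\widehat{T}u \ge u$ to the conclusion $\lambda \in \spec(\widehat{T})$ is invoked by name (``Lotz's technique'') but never carried out, and it is not a routine verification. Concretely: the signum operator associated with $\widehat{x}$ is a priori defined only as a multiplier on the principal ideal generated by $u$, which is an AM-space representable as a $C(K)$-space; that ideal need not be invariant under $\widehat{T}$; one has only the superinvariance $\widehat{T}u \ge u$ rather than a fixed vector, so the quotient construction that converts superinvariance into $\widetilde{T}\tilde{u} = \tilde{u}$ must be checked to preserve both the domination $\modulus{\widehat{S}z} \le \widehat{T}\modulus{z}$ and the eigenvalue relation; and finally one must show that conjugating by the signum multiplier transports the phase $\lambda$ into the (approximate point) spectrum of the dominating operator. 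These verifications are the actual content of \cite[Theorem~1.4]{RaebigerWolff1997}, and naming the technique does not discharge them. So: the route is the right one and matches the cited proof's strategy, but the decisive step remains unproved in your write-up.
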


In an attempt to prove Theorem~\ref{thm:intro-dom}, it seems natural to first try the following approach:

There is no loss of generality in assuming that the semigroup $T$ is bounded, i.e.\ that we have $\sup_{t \in [0,\infty)} \norm{T(t)} < \infty$.
Now let $T$ be analytic and let $t_0$ and $\lambda$ be as in Proposition~\ref{prop:kato}(ii). 
For each $t \in (0,t_0]$ we then have $\lambda \not\in \spec(T(t))$. 
Since, by the boundedness of $T$, $T(t)$ is power-bounded, and $0 \le S(t) \le T(t)$, it follows from Proposition~\ref{prop:raebiger-wolff} that $\lambda \not\in \spec(S(t))$.
So the only difficulty is to show that the resolvent $\Res(\lambda, S(t))$ is uniformly bounded as $t$ runs through $(0, t_0]$.

We may even assume that $\norm{T(t)} \to 0$ as $t \to \infty$, which means that the spectral radius of $T(t)$ is strictly less than $1$ for each $t > 0$.
Then the Neumann series representation of the resolvent immediately yields the estimate
\begin{align*}
	\modulus{\Res\big(\lambda,S(t)\big)x} \le \Res\big(\modulus{\lambda}, T(t)\big) \modulus{x} = \Res\big(1, T(t)\big) \modulus{x}
\end{align*}
for all $x \in E$. 
However, this does not give the desired boundedness since the spectral radius of $T(t)$ could be close to $1$ for small $t$, and thus $\Res(1, T(t))$ cannot be expected to be bounded as $t \downarrow 0$.

To solve this problem, we show now that property~(ii) in Kato's characterization can be rephrased as a spectral property of a single operator that acts on a vector-valued function space. 
This reformulation is based on the following simple lemma for general families of operators.

\begin{lemma}
	\label{lem:invertible-multiplication-op}
	Let $X$ be a (real or complex) Banach space, let $I$ be a non-empty set and let $T = (T_i)_{i \in I}$ be a norm bounded family of bounded linear operators on $X$. 
	Consider the operator
	\begin{align*}
		\hat T: \ell^\infty(I; X) \to \ell^\infty(I; X)
	\end{align*}
	given by
	\begin{align*}
		(\hat T f)(i) = T_i f(i)
	\end{align*}
	for each $f \in \ell^\infty(I; X)$ and $i \in I$. 
	
	Then $\hat T$ is bijective if and only if each of the operators $T_i$ is bijective and $\sup_{i \in I} \norm{T_i^{-1}} < \infty$.
\end{lemma}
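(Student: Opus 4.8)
The plan is to prove the two directions of the equivalence separately, as both rest on understanding how the operator $\hat T$ decomposes relative to the individual fibres $T_i$. The key structural observation is that $\hat T$ acts "coordinatewise," so bijectivity of $\hat T$ should translate into uniform invertibility of the family $(T_i)_{i \in I}$, the uniformity being exactly what distinguishes $\ell^\infty$-boundedness from mere pointwise invertibility.

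For the easier direction \leftProof, suppose each $T_i$ is bijective with $M := \sup_{i \in I} \norm{T_i^{-1}} < \infty$. First I would define a candidate inverse $\hat S \colon \ell^\infty(I;X) \to \ell^\infty(I;X)$ by $(\hat S g)(i) = T_i^{-1} g(i)$. The uniform bound $M$ guarantees that $\hat S g$ genuinely lands in $\ell^\infty(I;X)$, since $\norm{(\hat S g)(i)} \le M \norm{g(i)} \le M \norm{g}_\infty$ for every $i$, so $\norm{\hat S g}_\infty \le M \norm{g}_\infty$ and $\hat S$ is bounded. Then a pointwise check shows $\hat S \hat T = \hat T \hat S = \id$, since at each coordinate $i$ we simply have $T_i^{-1} T_i = T_i T_i^{-1} = \id_X$. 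This establishes that $\hat T$ is bijective.

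For the converse \rightProof, suppose $\hat T$ is bijective, hence boundedly invertible with inverse $\hat S := \hat T^{-1}$. I would first show each $T_i$ is surjective: given $y \in X$ and a fixed index $j \in I$, apply $\hat S$ to the constant-in-$j$ function $y \, \one_{\{j\}}$ (i.e.\ the element of $\ell^\infty(I;X)$ equal to $y$ at $j$ and $0$ elsewhere); evaluating $\hat T \hat S (y \, \one_{\{j\}}) = y \, \one_{\{j\}}$ at coordinate $j$ yields a preimage of $y$ under $T_j$. For injectivity of a fixed $T_j$, if $T_j x = 0$ then the function $x \, \one_{\{j\}}$ lies in the kernel of $\hat T$, forcing $x \, \one_{\{j\}} = 0$ and hence $x = 0$. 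The uniform bound is the crucial point: for each $i$ and each $y \in X$, the preimage $x_i := T_i^{-1} y$ equals $(\hat S (y \, \one_{\{i\}}))(i)$, whose norm is at most $\norm{\hat S} \, \norm{y \, \one_{\{i\}}}_\infty = \norm{\hat S} \, \norm{y}$; taking the supremum over $\norm{y} \le 1$ gives $\norm{T_i^{-1}} \le \norm{\hat S}$ uniformly in $i$, so $\sup_{i \in I} \norm{T_i^{-1}} \le \norm{\hat T^{-1}} < \infty$.

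The only genuine subtlety, and the step I would be most careful about, is extracting the uniform resolvent bound in the converse direction: it is tempting to verify invertibility of each $T_i$ pointwise and stop there, but pointwise invertibility alone does not give membership of $\hat S g$ in $\ell^\infty(I;X)$. The argument above sidesteps this by reading off the bound $\norm{T_i^{-1}} \le \norm{\hat T^{-1}}$ directly from the boundedness of the single operator $\hat T^{-1}$, applied to the indicator functions $y \, \one_{\{i\}}$ whose $\ell^\infty$-norm is controlled by $\norm{y}$ independently of $i$. Everything else is a routine coordinatewise verification.
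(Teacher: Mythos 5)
Your proof is correct and takes essentially the same route as the paper's: in both arguments, injectivity, surjectivity and the uniform bound $\norm{T_i^{-1}} \le \norm{\hat T^{-1}}$ for each $i \in I$ are all extracted by evaluating the identities $\hat T \hat T^{-1} = \id$ and $\hat T^{-1} \hat T = \id$ on functions supported at a single index. The only cosmetic difference is that the paper phrases injectivity via the identity $f = \hat T^{-1} \hat T f$ rather than via your direct kernel argument, which is the same idea.
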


This is certainly well-known among experts in operator theory and related results can be found in different places in the literature, for instance in \cite[Section~2]{Holderrieth1991} and \cite[Section~2]{Heymann2014}. 
We include the proof to demonstrate that it is particularly simple in the situation of Lemma~\ref{lem:invertible-multiplication-op}.

\begin{proof}[Proof of Lemma~\ref{lem:invertible-multiplication-op}]
	``$\Leftarrow$'' 
	This implication is straightforward to check.
	
	``$\Rightarrow$''
	Assume that $\hat T$ is bijective. For every $f \in \ell^\infty(I; X)$ and every $i \in I$ we then have
	\begin{align}
		\label{eq:lem:invertible-multiplication-op}
		f(i) = (\hat T  \,  \hat T^{-1} f)(i) = T_i \; (\hat T^{-1} f)(i).
	\end{align}
	By substituting functions for $f$ which are $0$ at each but one position, we can thus see that every operator $T_i$ is surjective.
	Similarly, we have $f = \hat T^{-1} \hat T f$ for each $f \in \ell^\infty(I; X)$, and by again substituting vectors for $f$ which are $0$ at each but one position, we can see that every operator $T_i$ is injective.
	
	Since we now know that each $T_i$ is bijective, equation~\eqref{eq:lem:invertible-multiplication-op} implies
	\begin{align*}
		T^{-1}\big(f(i)\big) = (\hat T^{-1} f)(i)
	\end{align*}
	for each $f \in \ell^\infty(I; X)$ and each $i \in I$. 
	We once again substitute vectors for $f$ that are $0$ at each but one position, and thus see that $\norm{T_i^{-1}} \le \norm{\hat T^{-1}}$ for each $i \in I$.
\end{proof}

As a direct consequence of the lemma we can reformulate Kato's characterization from Proposition~\ref{prop:kato}. 
We use the following notation:
for a $C_0$-semigroup $T$ on a Banach space $X$ and for $t_0 > 0$ we define a bounded linear operator
\begin{align*}
	\hat T_{t_0}: \ell^\infty\big((0,t_0]; X\big) \to \ell^\infty\big((0,t_0]; X\big)
\end{align*}
by 
\begin{align*}
	\big(\hat T_{t_0} f)(t) = T(t) f(t)
\end{align*}
for each $f \in \ell^\infty((0,t_0]; X)$ and each $t \in (0,t_0]$.
With this notation, we now obtain immediately:
	
\begin{corollary}
	\label{cor:kato-reformulated}
	For a $C_0$-semigroup $T$ on a complex Banach space $X$ the following assertions are equivalent:
	\begin{enumerate}[label=\upshape(\roman*)]
		\item 
		The semigroup $T$ is analytic.
		
		\item 
		There exists a time $t_0 > 0$ and a complex number $\lambda$ of modulus $1$ such that $\lambda \not\in \spec(\hat T_{t_0})$.
	\end{enumerate}
\end{corollary}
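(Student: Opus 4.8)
The plan is to observe that $\lambda - \hat T_{t_0}$ is itself a multiplication operator on $\ell^\infty((0,t_0]; X)$ — namely the one induced by the family $(\lambda - T(t))_{t \in (0,t_0]}$ — and then to apply Lemma~\ref{lem:invertible-multiplication-op} to this family. The statement of the corollary should then drop out by matching the conclusion of the lemma against condition (ii) of Kato's characterisation in Proposition~\ref{prop:kato}.

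First I would verify that the lemma is applicable, i.e.\ that the family $(\lambda - T(t))_{t \in (0,t_0]}$ is norm bounded. This is where the only genuine (and minor) point of care lies: since $T$ is a $C_0$-semigroup, it is bounded on the compact interval $[0,t_0]$, so $\sup_{t \in (0,t_0]} \norm{T(t)} < \infty$ and hence $\sup_{t \in (0,t_0]} \norm{\lambda - T(t)} \le \modulus{\lambda} + \sup_{t} \norm{T(t)} < \infty$. Thus the family genuinely satisfies the hypothesis of Lemma~\ref{lem:invertible-multiplication-op}, and the induced multiplication operator is precisely $\lambda - \hat T_{t_0}$.

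Next I would translate the spectral condition. For a fixed choice of $t_0$ and $\lambda$, the statement $\lambda \not\in \spec(\hat T_{t_0})$ means exactly that $\lambda - \hat T_{t_0}$ is bijective (bijectivity and invertibility coinciding for bounded operators by the open mapping theorem). By Lemma~\ref{lem:invertible-multiplication-op} this holds if and only if each $\lambda - T(t)$ is bijective — that is, $\lambda \not\in \spec(T(t))$ for all $t \in (0,t_0]$ — and $\sup_{t \in (0,t_0]} \norm{(\lambda - T(t))^{-1}} = \sup_{t \in (0,t_0]} \norm{\Res(\lambda, T(t))} < \infty$, the finite supremum playing the role of the constant $K$. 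This is precisely condition (ii) of Proposition~\ref{prop:kato}.

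Finally, I would note that the two existential quantifiers match up verbatim: asserting the existence of some $t_0 > 0$ and some $\lambda$ with $\modulus{\lambda} = 1$ satisfying $\lambda \not\in \spec(\hat T_{t_0})$ is, by the previous paragraph, the same as asserting condition (ii) of Kato's proposition. Since that condition is equivalent to analyticity of $T$ by Proposition~\ref{prop:kato}, the chain of equivalences closes and the corollary follows. There is no substantial obstacle here — the argument is a direct reformulation through the lemma — so the write-up can be correspondingly short.
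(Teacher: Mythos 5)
Your proposal is correct and follows exactly the paper's route: the paper also obtains the corollary ``immediately'' by applying Lemma~\ref{lem:invertible-multiplication-op} to the family $\big(\lambda - T(t)\big)_{t \in (0,t_0]}$, whose induced multiplication operator is $\lambda - \hat T_{t_0}$, and matching the lemma's conclusion against condition~(ii) of Proposition~\ref{prop:kato}. Your explicit verification that $\sup_{t \in (0,t_0]} \norm{T(t)} < \infty$ (uniform boundedness of a $C_0$-semigroup on compact intervals) is a detail the paper leaves implicit, and it is correctly handled.
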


With this formulation of Kato's characterization, the proof of our main result is very easy:

\begin{proof}[Proof of Theorem~\ref{thm:intro-dom}]
	After rescaling $S$ and $T$ we may, and shall, assume that $T$ is bounded.
	Since $T$ is analytic, there exists, by Corollary~\ref{cor:kato-reformulated}, a time $t_0 > 0$ and a complex number $\lambda$ of modulus $1$ such that $\lambda \not\in \spec(\hat T_{t_0})$, where we use the notation $\hat T_{t_0}$ introduced before Corollary~\ref{cor:kato-reformulated}.
	
	We use the same notation for $S$ and thus have $0 \le \hat S_{t_0} \le \hat T_{t_0}$ 
	(where $\ell^\infty\big((0,t_0]; E\big)$ is ordered pointwise). 
	Since $\hat T_{t_0}$ is power bounded (due to the boundedness of the semigroup $T$), Proposition~\ref{prop:raebiger-wolff} shows that $\lambda \not\in \spec(\hat S_{t_0})$. 
	Hence, again by Corollary~\ref{cor:kato-reformulated}, the semigroup $S$ is analytic.
\end{proof}

\begin{remark}
	\label{rem:non-positive}
	Theorem~\ref{thm:intro-dom} does not remain true if we consider domination of non-positive semigroups $S$, i.e., if we only assume
	\begin{align}
		\label{eq:rem:non-positive}
		\modulus{S(t)x} \le T(t) \modulus{x}
	\end{align}
	for all $x \in E$ and $t \in [0,\infty)$ instead of $0 \le S(t) \le T(t)$. 
	
	As a simple counterexample, let $p \in [1,\infty)$ and consider the semigroup $S$ on $\ell^p$ given by
	\begin{align*}
		(S(t)f)(n) = e^{\iu tn} f(n)
	\end{align*}
	for all $f \in \ell^p$ and $n \in \bbN$. 
	This semigroup is clearly not analytic, but it satisfies the domination condition~\eqref{eq:rem:non-positive} for $T(t) = \id$. 
	
	However, it seems that this example cannot be directly adapted to obtain a counterexample over the real field.
	More generally speaking, the author does not know whether analyticity of a positive semigroup $T$ together with~\eqref{eq:rem:non-positive} implies analyticity of $S$ if $S$ is not positive but leaves the real part of the underlying Banach lattice invariant.
\end{remark}

We close the article with several counterexamples that show that the angle 
of bounded analyticity or analyticity is not inhertied by the smaller semigroup.

\begin{examples}
	\label{exas:bdd-on-sector}
	In the situation of Theorem~\ref{thm:intro-dom} it can happen, 
	even in finite dimensions, 
	that $S$ is bounded on a strictly smaller or on a strictly larger complex sector than $T$.
	
	\begin{enumerate}[label=(\alph*)]
		\item\label{exas:bdd-on-sector:itm:larger-for-S} 
		\emph{$S$ has a strictly larger angle of bounded analyticity than $T$:}
		Consider the $3 \times 3$ permutation matrix
		\begin{align*}
			P
			:= 
			\begin{pmatrix}
				0 & 1 & 0 \\ 
				0 & 0 & 1 \\ 
				1 & 0 & 0 
			\end{pmatrix}
		\end{align*}
		and set $A := -\id$ and $B := P - \id$.
		The semigroup $T$ generated by $B$ dominates the semigroup $S$ generated by $A$ and both are positive.
		Morever, $S(z) = e^{zA}$ is uniformly bounded for $\re z > 0$, 
		but $T(z) = e^{zB}$ is not since $B$ has the eigenvaue $e^{2\pi \iu/3} - 1$ which is not in $(-\infty,0]$.
		
		\item\label{exas:bdd-on-sector:itm:smaller-for-S} 
		\emph{$S$ has a strictly smaller angle of bounded analyticity than $T$:}
		Consider the $3 \times 3$ matrices
		\begin{align*}
			A_0 
			:= 
			\begin{pmatrix}
				0 & 1 & 0 \\ 
				0 & 0 & 1 \\ 
				1 & 0 & 0 
			\end{pmatrix}
			\quad \text{and} \quad 
			B_0 
			:= 
			\begin{pmatrix}
				0 & 1 & 2 \\ 
				0 & 0 & 1 \\ 
				1 & 0 & 0 
			\end{pmatrix}
			.
		\end{align*}
		The eigenvalues of $A_0$ are $1$, $e^{2\pi \iu/3}$, and $e^{4\pi \iu/3}$, 
		and a brief computation shows that $B_0$ has the eigenvalues 
		$-1$, $\frac{1 - \sqrt{5}}{2}$, and $\frac{1 + \sqrt{5}}{2} =: \varphi$,
		which are all real. 
		Now let $S$ and $T$ be the semigroups on $\bbC^3$ generated by the matrices 
		\begin{align*}
			A := A_0 - \varphi I 
			\qquad \text{and} \qquad 
			B := B_0 - \varphi I
			,
		\end{align*}
		respectively. 
		Both semigroups are positive and $T$ dominates $S$ since $B \ge A$. 
		As $B$ is diagonalizable and all its eigenvalues are located in $(-\infty,0]$ 
		we see that $T(z) = e^{zB}$ is uniformly bounded for $\re z > 0$. 
		This boundedness property is not true for $S(z) = e^{zA}$, though, since $A$ has the eigenvalue $e^{2\pi \iu/3} - \varphi$ 
		which is not in $(-\infty,0]$.
	\end{enumerate}
\end{examples}

The author does not know whether boundedness of $T$ on a sector implies boundedness of $S$ on a (in general, smaller) sector.

\begin{examples}
	\label{exas:sector}
	In the situation of Theorem~\ref{thm:intro-dom} it can happen 
	that the angle of analyticity for $S$ is strictly smaller or stricly larger than for $T$:
	
	\begin{enumerate}[label=(\alph*)]
		\item\label{exas:sector:itm:larger-for-S} 
		\emph{$S$ has a strictly larger angle of analyticity than $T$:}
		Let $A, B \in \bbR^{3 \times 3}$ be the matrices from 
		Example~\ref{exas:bdd-on-sector}\ref{exas:bdd-on-sector:itm:larger-for-S}
		and consider the Banach lattice $E := \ell^2(\bbN; \bbC^3)$ of all square summable sequences in $\bbC^3$. 
		Define a closed operator $\tilde B: E \supseteq \dom(\tilde B) \to E$ on this space by
		\begin{align*}
			\dom(\tilde B) & := \left\{x \in E \suchthat \big(n B x(n)\big)_{n \in \bbN} \in E \right\}, \\
			  \tilde B x    & := \big(n B x(n)\big)_{n \in \bbN},
		\end{align*}
		and likewise an operator $\tilde A$. 
		Then $\tilde A$ and $\tilde B$ generate positive $C_0$-semigroups $\tilde S$ and $\tilde T$ on $E$, 
		respectively,
		and $\tilde T$ dominates $\tilde S$.

		Observe that $\tilde S$ extends to an analytic (and bounded) semigroup on the entire right half plane
		since the matrices $e^{zA} \in \bbC^{3 \times 3}$ are uniformly bounded for $\re z > 0$. 
		However, for each $n \in \bbN$ the number $n(e^{2\pi\iu/3} - 1)$ is an eigenvalue of $\tilde B$, 
		so the maximal angle of analyticity of $\tilde T$ is strictly less than $\pi/2$.
	
		\item\label{exas:sector:itm:smaller-for-S}
		\emph{$S$ has a strictly smaller angle of analyticity than $T$:}
		This example can be constructed in the same way as in~\ref{exas:sector:itm:larger-for-S} 
		if one uses the matrices $A$ and $B$ from 
		Example~\ref{exas:bdd-on-sector}\ref{exas:bdd-on-sector:itm:smaller-for-S} instead.
	\end{enumerate}
\end{examples}

\subsection*{Acknowledgement} 

I am grateful to the referee for very useful corrections 
and for asking several questions that lead to the Examples~\ref{exas:bdd-on-sector} and~\ref{exas:sector}.

\bibliographystyle{plain}
\bibliography{literature}

\end{document}